\documentclass[11pt]{article}
\usepackage{epigamath}

\usepackage[notext]{kpfonts}
\usepackage{baskervald}

\setpapertype{A4}


\usepackage[english]{babel}

\usepackage[dvips]{graphicx}     


\title{$p$-adic lattices are not K\"ahler groups}
\titlemark{$p$-adic lattices are not K\"ahler groups}
\author{\vspace{0cm} Bruno Klingler}
\authoraddresses{
\authordata{Bruno Klingler}{\firstname{Bruno} \lastname{Klingler}\\
\institution{Humboldt-Universit\"at zu Berlin, Germany}\\
\email{bruno.klingler@hu-berlin.de}}
}
\authormark{B. Klingler}
\date{\vspace{-5ex}} 
\journal{\'Epijournal de G\'eom\'etrie Alg\'ebrique} 
\acceptation{Received by the Editors on September 21, 2018, and in final form
on January 22, 2019. \\ Accepted on March 21, 2019.}


\acknowledgement{B.K.'s research is supported by an Einstein Foundation's
  professorship}

\usepackage[all]{xy}

\allowdisplaybreaks





\newcounter{subsectionnum}
\numberwithin{subsectionnum}{numsection}
\renewcommand{\thesubsectionnum}{\arabic{numsection}.\Alph{subsectionnum}}
\newlength{\lengthtitle}
\newcommand{\newsubsection}[1]{
\settowidth{\lengthtitle}{#1}
\ifnum\lengthtitle=0\paragraph{\bfseries \thesubsectionnum.}\else\paragraph{\bfseries \thesubsectionnum. #1.}\fi
\refstepcounter{subsectionnum}}

\newtheorem{theor}{Theorem}[section]
\newtheorem{rem}[theor]{Remark}
\newtheorem{cor}[theor]{Corollary}

\newcommand{\CC}{{\mathbb C}}
\newcommand{\RR}{{\mathbb R}}

\newcommand{\QQ}{{\mathbb Q}}
\newcommand{\FF}{{\mathbb{ F}}}
\newcommand{\ZZ}{{\mathbb Z}}

\newcommand{\G}{{\mathbf G}}

\newcommand{\Ga}{\Gamma}

\newcommand{\ol}{\overline}

\newcommand{\lo}{\longrightarrow}

\newcommand{\GL}{{\rm \bf GL}}

\newcommand{\rk}{\textnormal{rk}\,}

\newcommand{\cO}{{\mathcal O}}

\newcommand{\SU}{{\mathbf{ SU}}}

\newcommand{\SL}{{\mathbf{SL}}}

\newcommand{\OO}{\mathcal{O}}

\newcommand{\Ad}{\textnormal{Ad}\,}

\newcommand{\car}{\textnormal{char}}


\begin{document}


\maketitle



\begin{prelims}


\def\abstractname{Abstract}
\abstract{We show that any lattice in a simple $p$-adic Lie group is not the
fundamental group of a compact K\"ahler manifold, as well as some
variants of this result.}

\vspace{0.10cm}

\keywords{K\"ahler groups; lattices in Lie groups}

\vspace{0.10cm}

\MSCclass{57M05; 32Q55}

\vspace{0.35cm}

\languagesection{Fran\c{c}ais}{%

\textbf{Titre. Les r\'eseaux $p$-adiques ne sont pas des groupes k\"ahl\'eriens} \commentskip \textbf{R\'esum\'e.} Dans cette note, nous montrons qu'un r\'eseau d'un groupe de Lie $p$-adique simple n'est pas le groupe fondamental d'une vari\'et\'e k\"ahl\'erienne compacte, ainsi que des variantes de ce r\'esultat.}

\end{prelims}


\newpage

\setcounter{tocdepth}{1} \tableofcontents

\section{Results}

\newsubsection{} \label{K\"ahler groups}
A group is said to be a K\"ahler group if it is isomorphic to the
fundamental group of a connected compact K\"ahler manifold. In particular such a
group is finitely presented. As any finite \'etale cover of a compact
K\"ahler manifold is still a compact K\"ahler manifold, any finite
index subgroup of a K\"ahler group is a K\"ahler group. The most elementary necessary condition
for a finitely presented group to be K\"ahler is that its finite index subgroups have even rank
abelianizations. A classical question, due to Serre and still largely open, is to characterize
K\"ahler groups among finitely presented groups. A standard reference
for K\"ahler groups
is \cite{amoros}.

\newsubsection{} \label{lattices}
In this note we consider the K\"ahler problem for lattices in simple groups
over local fields. Recall that if $G$ is a locally compact topological group, a subgroup
$\Ga \subset G$ is called a {\em lattice} if it is a
discrete subgroup of $G$ with finite covolume (for any $G$-invariant
measure on the locally compact group $G$). 

We work in the following setting.
Let $I$ be a finite set of indices. For each $i \in I$ we fix a
local field $k_i$ and a simple algebraic group $\G_i$ defined and
isotropic over $k_i$. Let $G=
\prod_{i \in I} \G_i(k_i)$. The topology of the local fields $k_i$, $i 
\in I$, makes $G$ a locally compact topological group. We define $\rk G:= \sum_{i \in I} \rk_{k_{i}} \G_i$.

We consider $\Ga \subset G$ an {\em irreducible} lattice: 
there does not exist a disjoint decomposition $I = I_1 \coprod I_2$ into two
non-empty subsets such that, for $j=1,2$, the subgroup $\Gamma_j:= \Gamma
\cap G_{I_{j}}$ of $G_{I_{j}}:= \prod_{i \in I_{j}} \G_i(k_i)$ is a
lattice in~$G_{I_{j}}$. 

The reference for a detailed study of such lattices is \cite{mar}. In
Section \ref{reminder} we recall a few results for the convenience of the reader.

\newsubsection{}
Most of the lattices $\Ga$ as in Section \ref{lattices} are finitely presented (see
Section \ref{finite presentation}). The question whether $\Ga$ is K\"ahler or not has been studied by Simpson 
using his non-abelian Hodge theory when at least one of the $k_i$'s is
archimedean. He shows that if
$\Gamma$ is K\"ahler then necessarily for any $i \in I$ such
that $k_i$ is archimedean the group $\G_i$ has to be of Hodge
type (i.e. admits a Cartan involution which is an inner automorphism),
see \cite[Corollary 5.3 and 5.4]{Simpson}. For
example $\SL(n, \ZZ)$ is not a K\"ahler group
as $\SL(n, \RR)$ is not a group of Hodge type. In this note we prove:

\begin{theor} \label{main theorem}
Let $I$ be a finite set of indices and $G$ be a group of the form
$\prod_{j \in I} \G_j(k_j)$, where $\G_j$
is a simple algebraic group defined and isotropic over a local field $k_j$. Let
$\Gamma \subset G$ be an irreducible lattice. 

Suppose there exists an $i \in I$ such that $k_i$ is
non-archimedean. If $\rk G >1$ and $\car (k_i)=0$, or if $\rk G=1$
(i.e. $G= \G(k)$ with $\G$ a simple isotropic algebraic group of rank $1$
over a local field $k$)
then $\Ga$ is not a K\"ahler group.
\end{theor}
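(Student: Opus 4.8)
The plan is to prove the theorem by combining two inputs: (i) superrigidity / arithmeticity results for the lattices $\Ga$ in question, which severely restrict their linear representations over $\CC$, and (ii) factorization theorems from non-abelian Hodge theory (Simpson) together with the theory of harmonic maps to Bruhat–Tits buildings (Gromov–Schoen, Corlette–Simpson) which turn such representation-theoretic restrictions into a contradiction with the Kähler hypothesis. The first step is to dispose of the elementary case: if $\Ga$ were Kähler, then $\Ga$ and all its finite-index subgroups have abelianizations of even rank; by Margulis normal subgroup theorem (when $\rk G \geq 2$) or by property $(T)$ / Kazhdan's theorem in the higher rank case, these abelianizations are finite, so this test alone is inconclusive and we genuinely need the Hodge-theoretic input.

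\medskip

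\textbf{Higher rank case ($\rk G > 1$, $\car(k_i) = 0$).} Here Margulis arithmeticity and superrigidity apply. Suppose for contradiction that $\Ga = \pi_1(X)$ with $X$ compact Kähler. Fix $i \in I$ with $k_i$ non-archimedean of characteristic zero, so $k_i$ is a finite extension of $\QQ_p$. I would use the arithmeticity of $\Ga$ to produce, for this place $i$, a Zariski-dense unbounded representation $\rho \colon \Ga \to \G_i(k_i) \subset \GL_N(k_i)$ (essentially the tautological projection to the $i$-th factor, which by irreducibility of the lattice is Zariski-dense and has unbounded — indeed discrete, infinite — image in $\G_i(k_i)$ because $\G_i$ is isotropic over $k_i$). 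Now invoke the theorem of Gromov–Schoen: a Zariski-dense representation of a Kähler group into a $p$-adic group either has bounded image (lands in a compact subgroup, hence — after conjugation — in a parahoric, contradicting unboundedness) or the associated equivariant harmonic map to the Bruhat–Tits building of $\G_i$ factors through a harmonic map to a Riemann surface / the representation factors through a homomorphism to the fundamental group of a Riemann orbifold; more precisely one obtains that $X$ admits a surjection with connected fibers onto a hyperbolic orbicurve, i.e. $\Ga$ surjects (virtually) onto a surface group or a free group. Such a quotient makes $\Ga$ ``large'' and in particular produces an infinite-index normal subgroup with infinite quotient, contradicting the Margulis normal subgroup theorem (every normal subgroup of $\Ga$ is finite or finite-index) for $\rk G \geq 2$. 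This yields the contradiction.

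\medskip

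\textbf{Rank one case ($\rk G = 1$), $G = \G(k)$ with $\car(k)=0$.} Now $\Ga$ is a lattice in a rank-one $p$-adic group $\G(k)$, so $\Ga$ acts on the Bruhat–Tits tree $T$ of $\G(k)$ properly discontinuously with quotient a finite graph; by Bass–Serre theory $\Ga$ (virtually) splits as an amalgam or HNN extension over finite groups, and in fact such a lattice is virtually free when cocompact, or commensurable to an arithmetic group built from a division algebra / $\SL_2$ over a function field in the non-cocompact case. In either situation $\Ga$ is virtually a nontrivial free product or has a finite-index subgroup surjecting onto a nonabelian free group; hence some finite-index subgroup $\Ga'$ has $b_1(\Ga') = \infty$, or at least $\Ga'$ maps onto $F_2$. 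But a Kähler group that virtually surjects onto a free group of rank $\geq 2$ must (by the Kähler structure: such a surjection comes from a fibration over $\PP^1$ minus points, and one analyzes the first Betti numbers) — here I would simply quote that nonabelian free groups are not Kähler and more robustly that a Kähler group cannot surject onto $F_2$ with the relevant finiteness, equivalently $\Ga$ acting on a tree with finite edge stabilizers and infinitely many ends is incompatible with being Kähler, e.g. via the fact (Gromov, Arapura–Bressler–Ramachandran) that a Kähler group has one end or is virtually a surface group or finite, never a nontrivial amalgam over a finite group with both factors large. Since our $\Ga$ is none of these (it is not virtually a surface group — its cohomological dimension and the structure of its $p$-adic action rule this out, and it is infinite), we reach a contradiction.

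\medskip

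\textbf{Main obstacle.} The delicate point, and where I expect the real work to lie, is the \emph{characteristic zero} hypothesis on $k_i$ in the higher rank case and, more subtly, the passage in the rank-one case from ``$\Ga$ acts on a tree'' to ``$\Ga$ is not Kähler'': one must either land in the known one-endedness/virtual-surface-group dichotomy for Kähler groups, or, following Gromov–Schoen and Corlette–Simpson, run the harmonic-maps-to-buildings machinery to show the harmonic map to $T$ factors through a curve, then conclude. Ensuring the equivariant harmonic map exists (the representation has unbounded image but one needs a reductive/nonparabolic hypothesis, guaranteed here by Zariski density and semisimplicity of $\G$) and that its factorization genuinely contradicts the lattice structure — that is the crux. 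I would structure the argument so that both cases funnel into the single statement: \emph{an irreducible lattice as in the theorem with a non-archimedean factor admits an unbounded Zariski-dense $p$-adic linear representation, and no such representation of a Kähler group exists unless the group is virtually a surface group} — and then rule out the surface-group alternative by the rigidity/structure of $\Ga$.
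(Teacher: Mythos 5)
Your rank-one discussion is essentially the paper's own argument (Remark \ref{rem1}: by Lubotzky such a lattice is either not finitely generated or virtually a nontrivial free group, and free groups are not K\"ahler), though you have silently restricted to $\car(k)=0$ while the theorem's rank-one case allows positive characteristic, where the non-cocompact, non-finitely-generated subcase actually occurs. The real problem is in your higher-rank case. The dichotomy you invoke --- a Zariski-dense representation of a K\"ahler group into a simple $p$-adic group either has bounded image or factors through a hyperbolic orbicurve --- is a theorem (Gromov--Schoen, Corlette--Simpson) only when the Bruhat--Tits building of the target is a \emph{tree}, i.e.\ when $\rk_{k_i}\G_i=1$. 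For targets of $k_i$-rank at least $2$ (the building is a higher-dimensional Euclidean complex), the existence and regularity of the equivariant pluriharmonic map still hold, but the factorization-through-a-curve step is precisely what is \emph{not} known; no such theorem is available for, say, $\SL(n,\QQ_p)$ with $n\geq 3$. This is exactly why the paper states that not a single higher-rank, all-non-archimedean case was previously known: your proposed argument is the one that does not close.

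The paper's actual mechanism is different and bypasses harmonic maps entirely. Margulis superrigidity is used not to get arithmeticity per se but to get the vanishing $H^1(\Ga,\Ad\circ\rho)=0$ for the representation $\rho$ obtained by projecting to $\G_1(k_1)$ and composing with an absolutely irreducible representation; thus $\rho$ is \emph{cohomologically rigid}. Arithmeticity gives that $\Ga$ is linear in characteristic zero, whence (by Campana--Claudon--Eyssidieux and Claudon) the K\"ahler group $\Ga$ would be a complex projective group. The integrality theorem of Esnault--Groechenig for irreducible cohomologically rigid local systems on smooth projective varieties then forces $\rho$ to be integral, hence to have bounded image in $\GL(V)(k_1)$ --- contradicting the fact that the projection of an irreducible lattice to the isotropic factor $\G_1(k_1)$ is unbounded. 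If you want to salvage your approach you would need either to prove the factorization theorem for higher-rank buildings (open) or to replace it by this rigidity-plus-integrality input.
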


\begin{rem} \label{rem1} {\rm
Notice that the case $\rk G=1$ is essentially folkloric. As we did not find a
reference in this generality let us give the proof in this case. 

If $\Ga$ is not cocompact in $G$ (this is possible only if $k$ has
positive characteristic) then $\Ga$ is not finitely generated by
\cite[Corollary 7.3]{L}, hence not K\"ahler.

Hence we can assume that $\Ga$ is cocompact.
In that case it follows from \cite[Theorem 6.1 and 7.1]{L} that
$\Ga$ admits a finite index subgroup $\Ga'$ which is a (non-trivial)
free group. But a non-trivial free group is never K\"ahler, as it always admits a
finite index subgroup with odd Betti number (see \cite[Example 1.19
p.7]{amoros}). Hence $\Ga'$, thus also $\Ga$, is not K\"ahler.}
\end{rem}

On the other hand, to the best of our knowledge
not a single case of Theorem \ref{main theorem} in the case where $\rk G >1$
and all the $k_i$, $i \in I$, are non-archimedean fields of
characteristic zero was previously
known. The proof in this case is a corollary of Margulis'
superrigidity theorem and the recent integrality result of Esnault and
Groechenig (\cite[Theorem 1.3]{EG}, whose proof was greatly simplified in \cite{EG2}).

\newsubsection{} Let us mention some examples of Theorem \ref{main theorem}:

-- Let $p$ be a prime number, $I= \{1\}$, $k_1= \QQ_p$ , $\G =
\SL(n)$. A lattice in $\SL(n, \QQ_p)$, $n \geq 2$,
is not a K\"ahler group. This is new for $n \geq 3$.

-- $I=\{1;2\}$, $k_1= \RR$ and $\G_1 = \SU(r,s)$ for some
$r\geq s>0$, $k_2= \QQ_p$ and $\G_2 = \SL(r+s)$. Then any irreducible
lattice in $SU(r,s) \times \SL(r+s, \QQ_p)$ is not K\"ahler. In
Section \ref{reminder} we recall how to construct such lattices (they are
$S$-arithmetic). The analogous result that any irreducible lattice in
$\SL(n, \RR) \times \SL(n, \QQ_p)$ (for example $\SL(n, \ZZ[1/p])$) is
not a K\"ahler group already followed from Simpson's theorem.

\newsubsection{} I don't know anything about the case not
covered by Theorem \ref{main theorem}: can a (finitely presented) irreducible lattice in
$G=\prod_{i \in I} \G_i(k_i)$ with $\rk G>1$ and all $k_i$ of (necessarily
the same, see Theorem \ref{arithmeticity}) {\em positive characteristic}, be
a K\"ahler group? This question already appeared in \cite[Remark 0.2 (5)]{BKT}.

\section{Reminder on lattices} \label{reminder}

\newsubsection{} \label{S-arithmetic}
Examples of pairs $(G, \Ga)$ as in Section \ref{lattices} are provided by
{\em $S$-arithmetic groups}: let $K$ be a global field (i.e a finite extension of $\QQ$ or
$\FF_q(t)$, where $\FF_q$ denotes the finite field with $q$ elements),
$S$ a non-empty set of places of $K$, $S_\infty$ the set of
archimedean places of $K$ (or the 
empty set if $K$ has positive characteristic), $\cO^{S \cup
  S_{\infty}}$ the ring of elements of $K$ which are integral at all
places not belonging to $S \cup S_{\infty}$ and $\G$ an absolutely
simple $K$-algebraic group, anisotropic at all archimedean places not
belonging to $S$. A subgroup $\Lambda \subset \G(K)$ is said
$S$-arithmetic (or $S \cup S_\infty$-arithmetic) if it is
commensurable with $\G(\cO^{S \cup
  S_{\infty}})$ (this last notation depends on the choice of an
affine group scheme flat of finite type over $\cO^{S \cup
  S_{\infty}}$, with generic fiber
$\G$; but the commensurability class of the group $\G(\cO^{S \cup
  S_{\infty}})$ is independent of that choice). 

If $S$ is finite the image $\Ga$ in $\prod_{v \in S} \G(K_v)$ of an $S$-arithmetic
group $\Lambda$ by the diagonal map is an irreducible lattice (see \cite{B}
in the number field case and \cite{H} in the function field case). 
In the situation of Section \ref{lattices}, a (necessarily irreducible) lattice $\Ga \subset G$ is called
$S$-arithmetic if there exist $K$, $\G$, $S$ as above, a bijection
$i: S \lo I$, isomorphisms $K_v \lo k_{i(v)}$ and, via these
isomorphisms, $k_i$-isomorphisms $\varphi_i: \G \lo \G_i$ such that
$\Ga$ is commensurable with the image via $\prod_{i \in I} \varphi_i$
of an $S$-arithmetic subgroup of $\G(K)$. 

\newsubsection{}
Margulis' and Venkataramana's
arithmeticity theorem states that as soon as $\rk G$ is at least $2$
then every irreducible lattice in $G$ is of this type:

\begin{theor}[Margulis, Venkataramana] \label{arithmeticity}
In the situation of Section \ref{lattices}, suppose that $\Ga \subset G$ is an irreducible
lattice and that $\rk G \geq 2$. Suppose moreover for simplicity that $\G_i$, $i \in I$,
is absolutely simple. Then:
\begin{itemize}
\item[\rm (a)] All the fields $k_i$ have the same characteristic.
\item[\rm (b)] The group $\Ga$ is $S$-arithmetic.
\end{itemize}

\end{theor}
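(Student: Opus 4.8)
The plan is to deduce both assertions from the Margulis--Venkataramana \emph{superrigidity} theorem, in two layers: (i)~establish superrigidity for $\Ga$, the substantive ergodic-theoretic input; and (ii)~run the standard Borel--Harish-Chandra/Galois-descent machine to pass from superrigidity to $S$-arithmeticity. Two preliminary reductions are harmless: replacing each $\G_i$ by its simply connected cover and $\Ga$ by a commensurable subgroup, one may assume the $\G_i$ simply connected (they are absolutely simple by hypothesis); and, by Borel's density theorem (the $\G_i$ being isotropic, $G$ has no compact factor) together with the irreducibility hypothesis, each projection $p_i\colon\Ga\to\G_i(k_i)$ has unbounded Zariski-dense image $\Ga_i$ and is injective modulo a finite kernel (see \cite{mar}). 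One also uses that $\Ga$ is finitely generated (see \cite{mar}).

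For step~(i) one proves: for every local field $F$ and every homomorphism $\rho\colon\Ga\to\GL_N(F)$ with unbounded Zariski-dense image there is an $i\in I$ such that, after passing to a finite-index subgroup of $\Ga$, $\rho=\wt\rho\circ p_i$ for some continuous homomorphism $\wt\rho\colon\G_i(k_i)\to\GL_N(F)$; moreover, by Borel--Tits, a continuous $\wt\rho$ with Zariski-dense image exists only when $\car F=\car k_i$, in which case it is induced by an embedding $k_i\into F$ (possibly precomposed with a power of Frobenius) followed by a morphism of $F$-algebraic groups, so that $\G_i\times_{k_i}F$ becomes a form of the Zariski closure of $\rho(\Ga)$. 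This is Margulis' theorem when all $k_i$ have characteristic zero, and Venkataramana's when positive characteristics occur; in both cases the proof passes through a Furstenberg-type measurable $\Ga$-equivariant boundary map and an algebraicity statement for it.

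For step~(ii), fix $i\in I$, a faithful $k_i$-rational embedding $\G_i\into\GL_N$, and let $E\subset k_i$ be the subfield generated over the prime field by the matrix entries of $\Ga_i$. Since $\Ga_i$ is finitely generated, $E$ is a finitely generated field, $\G_i$ is defined over $E$ (it is the Zariski closure of $\Ga_i\subset\GL_N(E)$, an absolutely simple $E$-group), and $\Ga_i\subset\G_i(E)$. Now run through every place $w$ of $E$ and apply superrigidity to $\Ga\twoheadrightarrow\Ga_i\into\GL_N(E_w)$: either the image is bounded, so $\Ga_i$ is $w$-integral, or it is unbounded and hence extends from some factor $\G_{j(w)}(k_{j(w)})$, which by Borel--Tits identifies $E_w$ with the local field attached to $k_{j(w)}$ and makes $\G_i\times_E E_w$ a form of $\G_{j(w)}$. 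Since $I$ is finite, the second alternative holds for only finitely many $w$; gathering these into a finite set $S$ of places, one gets $\Ga_i$ commensurable with an $S$-arithmetic subgroup of $\G_i(E)$, and matching $S$ with $I$ through the completions yields $\prod_{w\in S}\G_i(E_w)\cong G$ carrying that subgroup to a group commensurable with $\Ga$. Finiteness of the covolume of $\Ga$ then forces $E$ to be a \emph{global} field: were $E$ the function field of a variety of dimension $\geq 2$, no finite set $S$ of places would make $\G_i$ over its ring of $S$-integers a lattice in $\prod_{w\in S}\G_i(E_w)$. This is assertion~(b); and since, through the bijection $I\leftrightarrow S$, one has $\car k_j=\car E_{w(j)}=\car E$ for every $j\in I$, it also yields assertion~(a).

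The genuinely hard part is step~(i) in positive characteristic --- Venkataramana's contribution beyond Margulis, where Frobenius twists, purely inseparable phenomena, and possibly non-smooth subgroup schemes all require new arguments --- together with the correlated point in step~(ii) of ruling out transcendence degree $\geq 2$ for the trace field $E$; in the characteristic-zero case the whole argument is Margulis' classical one.
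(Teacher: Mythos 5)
First, a point of context: the paper does not prove this theorem. It is quoted as a black box, with the proof attributed to Margulis \cite{Mar1} in characteristic zero and to Venkataramana \cite{V} in general. Your outline --- superrigidity for $\Ga$, then Borel--Tits plus a trace-field/integrality argument to descend to $S$-arithmeticity --- is indeed the strategy of those references, so there is no divergence of method to report. But what you have written is a sketch whose entire substantive content, namely superrigidity itself (step~(i), a book-length argument via boundary maps), is asserted rather than established; it therefore cannot stand as a proof of the theorem, only as a correct road map to the literature.

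Within the part you do spell out, step~(ii), there are two concrete gaps. First, you take finite generation of $\Ga$ (hence of the field $E$ generated by matrix entries) as a harmless preliminary. In positive characteristic this is not free: non-cocompact lattices over function fields can fail to be finitely generated, and securing enough finiteness for the trace field is precisely one of the ``technical difficulties'' that \cite{V} must address; it cannot simply be quoted from \cite{mar}. Second, and more seriously, your descent argument quantifies over ``every place $w$ of $E$'' and only at the end concludes that $E$ must be global because otherwise no finite $S$ would produce a lattice. This is backwards. For a finitely generated field of transcendence degree $\geq 2$ over the prime field, the collection of places you are running over is not the right object (no product formula, no finiteness statements), so the dichotomy ``bounded at all but finitely many $w$'' has no content until you already know $E$ is global. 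The standard argument first proves that the relevant traces are algebraic over the prime field --- by applying superrigidity to representations over local fields obtained from valuations that single out a putative transcendental element, and deriving a contradiction with Borel--Tits --- and only then runs the integrality-at-almost-all-places argument over the resulting global field. Relatedly, your assertion that ``the second alternative holds for only finitely many $w$ since $I$ is finite'' needs an argument: a priori infinitely many places could give unbounded image all factoring through the same factor $\G_j(k_j)$; this is excluded because Borel--Tits identifies $E_w$ with $k_j$ and a global field has only finitely many places with a prescribed completion. None of this affects the paper, which uses the theorem only as a citation, but as a self-contained proof your step~(ii) does not yet close.
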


\begin{rem} {\rm
Margulis \cite{Mar1} proved Theorem \ref{arithmeticity} when $\car (k_i) =0$ for all $i
\in I$. Venkatarama \cite{V} had to overcome many technical difficulties in
positive characteristics to extend Margulis' strategy to the general case.
}
\end{rem}

On the other hand, if $\rk G=1$ (hence $I=\{1\}$) and $k = k_1$ is
non-archimedean, there exist non-arithmetic lattices 
in $G$, see \cite[Theorem A]{L}.

\newsubsection{} \label{finite presentation}

With the notations of Section \ref{S-arithmetic}, an $S$-arithmetic lattice
$\Ga$ is always finitely presented except if $K$ is a function field,
and $\rk_K \G= \rk G = |S|=1$ (in which case $\Ga$ is not even
finitely generated) or $\rk_K \G>0$ and $\rk G=2$ (in which case $\Ga$
is finitely generated but not finitely presented). In the number field
case see the result of Raghunathan \cite{rag} in the classical
arithmetic case and of Borel-Serre \cite{BS} in the general $S$-arithmetic
case; in the function field case see the work of Behr, e.g. \cite{Behr}. For example the
lattice $\SL_2(\FF_q[t])$ of $\SL_2(\FF_q((1/t)))$ is not finitely
generated, while the lattice $\SL_3(\FF_q[t])$ of
$\SL_3(\FF_q((1/t)))$ is finitely generated but not finitely presented.

\section{Proof of Theorem \ref{main theorem}}

Thanks to Remark \ref{rem1} we can assume that $\rk G >1$.
In this case the main tools for proving Theorem \ref{main theorem} are the
recent result of Esnault and Groechenig and
Margulis' superrigidity theorem.

\newsubsection{}

Recall that a linear representation $\rho: \Ga \lo \GL(n, k)$ of a
group $\Gamma$ over a field $k$ is cohomologically rigid if $H^1(\Ga,
\Ad \rho) =0$. A representation $\rho: \Ga \lo \GL(n, \CC)$ is said to
be integral if it factorizes through $\rho: \Ga \lo \GL(n, K)$, $K
\hookrightarrow \CC$ a number field, and moreover stabilizes an
$\OO_K$-lattice in $\CC^n$ (equivalently, see \cite[Corollary 2.3 and 2.5]{Bass}: for any embedding $v: K
\hookrightarrow k$ of $K$ in a non-archimedean
local field $k$ the composed representation $\rho_v: \Ga \lo \GL(n, K)
\hookrightarrow \GL(n, k)$
has bounded image in $\GL(n, k)$). A group will be said {\em complex projective} if is isomorphic to the fundamental
group of a connected smooth complex projective variety. This is a
special case of a K\"ahler group (the question whether or not any
K\"ahler group is complex projective is open).

In \cite[Theorem 1.1]{EG2} Esnault and Groechenig prove that if $\Gamma$ is a complex projective group then 
any irreducible cohomologically rigid representation $\rho: \Ga \lo
\GL(n, \CC)$ is integral. This was conjectured by Simpson.

\newsubsection{}

A corollary of \cite[Theorem 1.1]{EG2} is the following:

\begin{cor} \label{lem1}
Let $\Ga$ be a complex projective group. Let $k$ be a
non-archimedean local field of characteristic zero and let $\rho: \pi_1(X) \lo \GL(n, k)$ be
an absolutely irreducible cohomologically rigid representation. Then
$\rho$ has bounded image in $\GL(n, k)$.
\end{cor}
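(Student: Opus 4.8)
The plan is to descend $\rho$ to a number field, apply the complex-projective integrality theorem of Esnault--Groechenig recalled above, and then read off boundedness at the place of $k$. So first I would use cohomological rigidity to descend $\rho$. Since $\Ga$ is complex projective it is finitely presented, hence the representation scheme $R:=\Hom(\Ga,\GL_n)$ is affine of finite type over $\QQ$, carries the conjugation action of $\GL_n$, and the group cohomology $H^1(\Ga,-)$ commutes with extension of the coefficient field. Because $\rho$ is absolutely irreducible its centralizer in $\GL_n$ consists of scalars, so its $\GL_n$-orbit is closed (semisimple representations have closed orbit) of dimension $n^2-1$, while the Zariski tangent space of $R$ at $\rho$ has dimension $\dim Z^1(\Ga,\Ad\rho)=n^2-1+\dim H^1(\Ga,\Ad\rho)=n^2-1$ by rigidity; hence $\rho$ is a smooth point of $R$ whose orbit is open, so this orbit is a connected component of $R$ and the image of $\rho$ in the character variety $R/\!/\GL_n$ is an isolated point. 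The finitely many isolated points of $(R/\!/\GL_n)_{\Qbar}$ are permuted by $\Gal(\Qbar/\QQ)$, hence individually defined over number fields, so the character $\gamma\mapsto\tr\rho(\gamma)$ takes values in a number field $K_0\subseteq k$; by absolute irreducibility and Hilbert's Theorem 90 I may then, after replacing $k$ by a finite extension $k_1$ and conjugating inside $\GL_n(k_1)$, assume that $\rho$ is the scalar extension, along an embedding $\iota\colon K_0'\hookrightarrow k_1$, of an absolutely irreducible representation $\rho_0\colon\Ga\to\GL_n(K_0')$ over a number field $K_0'$. By the base-change property of $H^1$, $\rho_0$ is again cohomologically rigid.

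Next I would pass to the complex place. Choosing an embedding $\sigma\colon K_0'\hookrightarrow\CC$, the representation $\sigma_*\rho_0\colon\Ga\to\GL_n(\CC)$ is absolutely irreducible and cohomologically rigid, so by \cite[Theorem 1.1]{EG2} (using that $\Ga$ is complex projective) it is integral. After a further finite extension of $K_0'$ and a conjugation I may then assume that $\rho_0$ itself stabilizes an $\OO_{K_0'}$-lattice in $(K_0')^n$, i.e., by the Bass criterion recalled above, that for \emph{every} finite place $w$ of $K_0'$ the image of $\rho_0\otimes_{K_0'}(K_0')_w$ in $\GL_n((K_0')_w)$ is bounded.

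Finally I would come back to $k$. The embedding $\iota$ induces a finite place $w_0$ of $K_0'$ together with a finite, hence proper, embedding of local fields $(K_0')_{w_0}\hookrightarrow k_1$; composing, $\rho_0\otimes k_1$ has bounded image in $\GL_n(k_1)$. Since $\rho$ is $\GL_n(k_1)$-conjugate to $\rho_0\otimes_{K_0'}k_1$ and conjugation is a homeomorphism of $\GL_n(k_1)$, the image of $\rho$ is bounded in $\GL_n(k_1)$; and as $\GL_n(k)$ is closed in $\GL_n(k_1)$ (the extension $k_1/k$ being finite) while $\rho$ takes values in $\GL_n(k)$, its image is bounded in $\GL_n(k)$, which is the assertion.

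I expect the one delicate point to be the descent step: that a cohomologically rigid, absolutely irreducible representation is, up to conjugation over a finite extension, defined over a number field, and that one can arrange the embedding of that number field into $\overline k$ so that the resulting place $w_0$ is controlled — which is precisely why the Esnault--Groechenig conclusion is needed at \emph{all} finite places, forcing an honest $\OO_{K_0'}$-lattice rather than merely an $S$-integral one. Modulo this, the statement is bookkeeping with characters and completions, the only substantive input being the integrality theorem, which I treat as a black box.
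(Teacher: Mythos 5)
Your proposal is correct and follows essentially the same route as the paper: use rigidity and absolute irreducibility to descend $\rho$ (after a finite extension of $k$ and a conjugation) to a number field inside $k$, apply the Esnault--Groechenig integrality theorem to the representation obtained at a complex place, and translate integrality into boundedness at the non-archimedean place via the Bass criterion. The only difference is that you spell out the descent-to-a-number-field step (isolated point of the character variety, Galois action, Brauer/Hilbert 90 obstruction) which the paper simply asserts.
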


\begin{proof}
Let $\ol{k}$ be an algebraic closure of $k$.
As $\rho$ is absolutely irreducible and cohomologically rigid there 
exists $g \in \GL(n, \ol{k})$ and a number field $K \subset \ol{k}$ such that $\rho^g(\Gamma):=
g\cdot \rho \cdot g^{-1}(\Ga) \subset \GL(n, \ol{k})$ lies in
$\GL(n, K)$. 

Let $k'$ be the finite extension of $k$ generated by 
$K$ and the matrix coefficients of $g$ and $g^{-1}$. This is still a non-archimedean
local field of characteristic zero, and both $\rho(\Gamma)$ and
$\rho^g(\Gamma)$ are subgroups of $\GL(n, k')$. As $\rho: \Ga \lo \GL(n, k) \subset \GL(n, k')$
has bounded image in $\GL(n, k)$ if and only if $\rho^g: \Ga \lo \GL(n,
k')$ has bounded image in $\GL(n, k')$, we can
assume, replacing $\rho$ by $\rho^g$ and $k$ by $k'$ if necessary,
that $\rho(\Gamma)$ is contained in $\GL(n, K)$ with $K \subset k$ a number field.

Let $\sigma: K \hookrightarrow \CC$ be an infinite place of $K$ and
consider $\rho^\sigma: \Ga \stackrel{\rho}{\lo} \GL(n, K)
\stackrel{\sigma}{\hookrightarrow} \GL(n,\CC)$ the associated
representation. As $\rho$ is absolutely irreducible, the
representation $\rho^\sigma$ is irreducible. As 
$$H^1(\Ga, \Ad \circ \rho^\sigma) = H^1(\Ga, \Ad \circ \rho)\otimes_{K, \sigma} \CC =0$$
the representation $\rho^\sigma$ is cohomologically rigid.

It follows from \cite[Theorem 1.3]{EG} that
$\rho^\sigma$ is integral. In particular, considering the embedding $K
\subset k$, it follows that the representation $\rho: \Ga \lo \GL(n, k)$ 
has bounded image in $\GL(n, k)$.
\hfill $\Box$
\end{proof}

\newsubsection{}
Notice that we can upgrade Corollary \ref{lem1} to the K\"ahler world if we restrict
ourselves to faithful representations:

\begin{cor} \label{lem2}
The conclusion of Corollary \ref{lem1} also holds for $\Ga$ a K\"ahler group 
and $\rho: \pi_1(X) \lo \GL(n, k)$ a {\em faithful} representation.
\end{cor}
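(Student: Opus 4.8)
The first half of the proof of Corollary \ref{lem1} uses only the cohomological rigidity of $\rho$ and the finite presentability of $\Ga$, hence carries over verbatim: there exist $g \in \GL(n, \ol k)$ and a number field $K \subset \ol k$ with $g\rho g^{-1}(\Ga) \subset \GL(n, K)$, and after replacing $\rho$ by $\rho^g$ and $k$ by the finite extension generated by $K$ and the matrix coefficients of $g,g^{-1}$ (harmless for the boundedness conclusion, exactly as in Corollary \ref{lem1}) we may assume $\rho(\Ga) \subset \GL(n, K)$ with $K \subset k$ a number field. The point is that conjugation by $g$ and extension of scalars are injective on matrix groups, so $\rho\colon \Ga \to \GL(n, K)$ is \emph{still faithful}. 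Choosing an archimedean place $\sigma\colon K \hookrightarrow \CC$, the representation $\rho^\sigma\colon \Ga \to \GL(n, \CC)$ is then faithful (as $\sigma$ is injective on $\GL(n,K)$), irreducible (as $\rho$ is absolutely irreducible) and cohomologically rigid (since $H^1(\Ga, \Ad \circ \rho^\sigma) = H^1(\Ga, \Ad \circ \rho)\otimes_{K, \sigma} \CC$). As at the end of the proof of Corollary \ref{lem1}, once $\rho^\sigma$ is known to be \emph{integral} the embedding $K \subset k$ forces $\rho\colon \Ga \to \GL(n, k)$ to have bounded image. So the statement reduces to showing: \emph{a faithful, irreducible, cohomologically rigid representation $\rho^\sigma\colon \pi_1(X) \to \GL(n, \CC)$ of a K\"ahler group is integral.}

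To prove this I would reduce to the projective case handled by \cite[Theorem 1.3]{EG}. Being irreducible, $\rho^\sigma$ is reductive; being cohomologically rigid it underlies a polarized complex variation of Hodge structure on $X$ --- this is Simpson's argument (a rigid point is fixed by the $\CC^\times$-action on the moduli space of semisimple representations), whose analytic input (Corlette's harmonic metric and semisimplicity) is available on an arbitrary compact K\"ahler manifold. Now the faithfulness of $\rho^\sigma$ places us in the setting of the linear Shafarevich conjecture: by the work of Eyssidieux and of Eyssidieux--Katzarkov--Pantev--Ramachandran the universal cover $\wt X$ is holomorphically convex and the $\rho^\sigma$-Shafarevich reduction produces a proper surjective morphism $f\colon X \to Z$ onto a normal projective variety through which $\rho^\sigma$ factors up to a finite kernel --- projectivity of $Z$, rather than mere K\"ahlerness, coming from Sommese-type algebraicity of the image of the period map together with rigidity. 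Since $\rho^\sigma$ is faithful this kernel is finite, and after passing to a suitable finite \'etale cover (again compact K\"ahler, with K\"ahler fundamental group) and a resolution of singularities, $\rho^\sigma$ gives rise to a representation of the fundamental group of a \emph{smooth projective} variety which (see below) remains irreducible and cohomologically rigid, to which \cite[Theorem 1.3]{EG} (or \cite[Theorem 1.1]{EG2}) applies, yielding integrality.

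The crux --- and the only place where faithfulness is essential --- is exactly this passage to the projective world: one must (i) build the Shafarevich-type morphism with a projective target compatibly with $\rho^\sigma$ (this is where the geometry of period domains and the holomorphic convexity of $\wt X$ enter, rigidity being what makes the period-theoretic target algebraic), and (ii) check that irreducibility and cohomological rigidity persist through the finite \'etale cover and the descent to $\pi_1(Z)$ --- passing if necessary to an irreducible constituent --- so that \cite[Theorem 1.3]{EG} genuinely applies. Both are delicate bookkeeping rather than conceptual obstructions. (When the relevant Bruhat--Tits building is a tree one can bypass all of this: by Gromov--Schoen an unbounded such $\rho$ factors through the orbifold fundamental group of a hyperbolic curve, whose adjoint cohomology is nonzero and injects by inflation into $H^1(\pi_1(X), \Ad \circ \rho)$, contradicting rigidity and requiring no faithfulness; it is the higher-rank case that forces the detour through \cite{EG}, and hence the faithfulness hypothesis, since the known proof of \cite[Theorem 1.3]{EG} reduces to positive characteristic and really uses that $X$ is algebraic.)
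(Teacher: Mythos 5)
Your reduction to the statement ``a faithful, irreducible, cohomologically rigid representation of a K\"ahler group over $\CC$ is integral'' is fine, and you have correctly located where faithfulness must enter: it is the ticket from the K\"ahler world to the projective world. But the way you then try to buy that ticket --- by hand, via the Shafarevich machinery --- both contains a genuine gap and reinvents a theorem the paper simply cites. The paper's proof is two lines: a faithful $\rho$ into $\GL(n,k)$ with $\car(k)=0$ makes $\Ga$ a linear group in characteristic zero; by Campana--Claudon--Eyssidieux \cite[Theorem 0.2]{CCE14} a finite index subgroup of such a K\"ahler group is a complex projective group, by Claudon's refinement \cite[Corollary 1.3]{C} the group $\Ga$ \emph{itself} is complex projective, and then Corollary \ref{lem1} applies verbatim, with no need to reopen its proof.

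The gap in your version is your step (ii), which you dismiss as ``delicate bookkeeping rather than conceptual obstructions''. Cohomological rigidity is \emph{not} inherited by finite index subgroups: in characteristic zero the restriction map $H^1(\Ga,\Ad\circ\rho)\to H^1(\Ga',\Ad\circ\rho)$ is injective but identifies the source only with the $\Ga/\Ga'$-invariants of the target, so $H^1(\Ga',\Ad\circ\rho)$ can be strictly larger and the restricted representation can deform even when $\rho$ does not. Hence after ``passing to a suitable finite \'etale cover'' you may no longer be in a position to apply \cite[Theorem 1.3]{EG} at all. This is exactly why the paper insists on Claudon's upgrade from ``a finite index subgroup of $\Ga$ is projective'' to ``$\Ga$ itself is projective'': once $\Ga$ is realized as the fundamental group of a smooth projective variety, the original $\rho$, with its original irreducibility and rigidity, is fed directly into Corollary \ref{lem1}, and no cover or descent is needed. (Your step (i) --- producing a Shafarevich-type morphism to a projective target through which a faithful representation of a K\"ahler group factors --- is essentially the content of the cited results of Eyssidieux et al.\ and \cite{CCE14}; the idea is right, but it is a substantial theorem to be cited, not established in passing.)
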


\begin{proof}
As the representation $\rho$ is faithful, the group $\Ga$ is a linear
group in characteristic zero. It then follows that the
K\"ahler group $\Ga$ is a complex projective group
(see \cite[Theorem 0.2]{CCE14} which proves that a finite index subgroup of
$\Ga$ is complex projective, and its refinement \cite[Corollary 1.3]{C} which proves
that $\Ga$ itself is complex projective). The result now
follows from Corollary \ref{lem1}.
\hfill $\Box$
\end{proof}

\newsubsection{}
Let us apply Corollary \ref{lem1} to the case of Theorem \ref{main theorem} where $\rk G>1$. Renaming the indices of $I$ if
necessary, we can assume that $I = \{1, \cdots, r\}$ and $k_1$ is
non-archimedean of characteristic zero. Let us choose an absolutely irreducible $k_1$-representation $\rho_{\G_{1}}:
\G_1 \lo \GL(V)$. Let $$\rho: \Ga \lo G \stackrel{p_{1}}{\lo} \G_1(k_1) \lo
\GL(V)$$ be the representation of $\Ga$ deduced from $\rho_{\G_{1}}$
(where $p_1: G \lo  \G_1(k_1) $ denotes the projection of $G$ onto its
first factor). As $p_1(\Ga)$ is Zariski-dense in $\G_1$ it follows that
$\rho$ is absolutely irreducible.

As $\rk G >1$, Margulis' superrigidity
theorem applies to the lattice $\Ga$ of $G$:  it implies in particular
that $ H^1(\Ga, \Ad \circ \rho) =0$ (see
\cite[Theorem (3)\,(iii) p.\,3]{mar}). Hence the representation $\rho: \Ga
\lo \GL(V)$ is cohomologically rigid.

Suppose by contradiction that $\Ga$ is a K\"ahler group. By
Theorem \ref{arithmeticity}\,(a) and the assumption that $k_1$ has
characteristic zero it follows that $\Ga$ is linear in characteristic
zero. As in the proof of Corollary \ref{lem2} we deduce that $\Ga$ is a
complex projective group. It then follows from Corollary \ref{lem1} that $\rho$
has bounded image in $\GL(V)$, hence that $p_1(\Ga)$ is relatively compact in
$\G(k_1)$. This contradicts the fact that $\Ga$ is a lattice in
$G= \G(k_1) \times \prod_{j \in I \setminus\{1\}} \G(k_j)$.
\hfill $\Box$

\providecommand{\bysame}{\leavevmode\hbox to3em{\hrulefill}\thinspace}
%
%

\bibliographystyle{amsalpha}

\begin{thebibliography}{99}

\bibitem[ABCKT96]{amoros} J. Amoros, M. Burger, K. Corlette,
  D. Kotschick, and D. Toledo, \emph{Fundamental groups of compact K\"ahler
    manifolds}, Mathematical Surveys and Monographs, vol. 44, American Mathematical Society, Providence, RI, 1996.    
    \MR{1379330}

  \bibitem[Ba80]{Bass} H. Bass, \emph{Groups of integral representation
      type}, Pacific J. Math. {\bf 86} (1980), no. 1, 15--51.
\MR{0586867}
    
\bibitem[Behr98]{Behr} H. Behr, \emph{Arithmetic groups over function
    fields. I. A complete characterization of finitely generated and finitely presented arithmetic subgroups of reductive algebraic groups.} J. Reine Angew. Math. {\bf 495} (1998), 79--118.
\MR{1603845}

\bibitem[B63]{B} A. Borel, \emph{Some finiteness properties of adeles
    groups over number fields}, Inst. Hautes \'Etudes Sci. Publ. Math. {\bf 16} (1963), 5--30.
  \MR{0202718}

\bibitem[BS76]{BS} A. Borel and J-P. Serre, \emph{Cohomologie d'immeubles
    et de groupes $S$-arithm\'etiques}, Topology {\bf 15} (1976), no. 3, 211--232.
\MR{0447474}

\bibitem[BKT13]{BKT} Y. Brunebarbe, B. Klingler, and B. Totaro, 
\emph{Symmetric differentials and the fundamental group}, Duke
  Math. J. {\bf 162} (2013), no. 14, 2797--2813.
\MR{3127814}

 
\bibitem[CCE14]{CCE14} F. Campana, B. Claudon, and P. Eyssidieux, 
\emph{Repr\'esentations lin\'eaires des groupes k\"ahl\'eriens et de
    leurs analogues projectifs}, J. \'Ec. Polytech. Math. {\bf 1} (2014),
  331--342.
\MR{3322791}

\bibitem[C17]{C} B. Claudon, \emph{Smooth families of tori and linear
    K\"ahler groups}, Ann. Fac. Sci. Toulouse Math. (6) {\bf 27} (2018) no. 3, 477--496 
\MR{3869072}
  
  
\bibitem[EG17]{EG} H. Esnault and M. Groechenig, \emph{Rigid connections and
    $F$-isocrystals}, preprint 2017. 
\href{https://arxiv.org/abs/1707.00752}{arXiv:1707.00752}     

\bibitem[EG17-2]{EG2} H. Esnault and M. Groechenig, \emph{Cohomologically
    rigid local systems and integrality}, Selecta Math. (N.S.) {\bf 24}
  (2018), no. 5, 4279--4292.
  \MR{3874695}

\bibitem[H69]{H} G. Harder, \emph{Minkowskische Reduktionstheorie
    \"uber Funktionenk\"orpern}, Invent. Math. {\bf 7} (1969),
  33--54. \MR{0284441}

\bibitem[L91]{L} A. Lubotzky, \emph{Lattices in rank one Lie groups
    over local fields}, Geom. Funct. Anal. {\bf 1} (1991), no. 4, 406--431.
    \MR{1132296}

\bibitem[Mar84]{Mar1} G. A. Margulis, \emph{Arithmeticity of the
    irreducible lattices in the semisimple groups of rank greater than
    $1$}, Invent. Math. {\bf 76} (1984), 93--120.
    \MR{0739627}

\bibitem[Mar91]{mar} G. A. Margulis, \emph{Discrete subgroups of
    semisimple Lie groups}, Ergebnisse der Mathematik und ihrer
  Grengebiete (3), vol. 17, 
  Springer-Verlag, Berlin, 1991.
\MR{1090825}

\bibitem[R68]{rag} M. S. Raghunathan, \emph{A note on quotients of real
    algebraic groups by arithmetic subgroups}, Invent. Math {\bf 4}
  (1968), 318--335.
  \MR{0230332}


\bibitem[Si92]{Simpson} C. T. Simpson, \emph{Higgs bundles and local
    systems}, Inst. Hautes \'Etudes Sci. Publ. Math. {\bf 75} (1992),
  5-95. \MR{1179076}

\bibitem[V88]{V} T. N. Venkataramana, \emph{On superrigidity and
    arithmeticity of lattices in semisimple groups over local fields
    of arbitrary characteristic}, Invent. Math. {\bf 92} (1988),
no. 2, 255--306.
\MR{0936083}
\end{thebibliography}
\bibliographymark{References}
\def\cprime{$'$}

\end{document}